\documentclass[10pt]{article}
\usepackage[T1]{fontenc}
\usepackage{lmodern}
\usepackage{amsmath,amssymb,amsthm}
\usepackage[a4paper,margin=25mm]{geometry}
\usepackage{microtype,booktabs,enumitem}
\usepackage{xcolor}
\usepackage{hyperref}
\hypersetup{
  colorlinks=true,
  allcolors=blue!50!black,
  pdftitle={Degreewise Cut-Semigroup Saturation for $K_5$-Minor-Free Graphs and Seymour's Planar Edge-Colouring Conjecture}
}
\newtheorem{theorem}{Theorem}[section]
\newtheorem{lemma}[theorem]{Lemma}
\newtheorem{proposition}[theorem]{Proposition}
\newtheorem{corollary}[theorem]{Corollary}
\theoremstyle{definition}
\newtheorem{definition}[theorem]{Definition}
\theoremstyle{remark}
\newtheorem{remark}[theorem]{Remark}
\DeclareMathOperator{\Cut}{Cut}
\DeclareMathOperator{\Sat}{Sat}
\DeclareMathOperator{\Dec}{Dec}
\DeclareMathOperator{\conv}{conv}
\DeclareMathOperator{\PM}{PM}
\DeclareMathOperator{\cone}{cone}
\newcommand{\ZZ}{\mathbb Z}
\newcommand{\RR}{\mathbb R}
\newcommand{\NN}{\mathbb Z_{\geq0}}
\newcommand{\EC}{\mathcal E}
\newcommand{\symd}{\mathbin{\triangle}}
\title{Degreewise Cut-Semigroup Saturation for $K_5$-Minor-Free Graphs and Seymour's Planar Edge-Colouring Conjecture}
\author{SeungJu Lee}
\date{\today}
\begin{document}
\maketitle
\begin{abstract}
For every positive integer $k$, we prove that the homogeneous cut semigroup of every $K_5$-minor-free graph is saturated at height $k$ if and only if every planar $k$-graph is $k$-edge-colourable.
Here a $k$-graph is a loopless $k$-regular multigraph in which every odd vertex cut has size at least $k$.
A triangle expansion of a cubic plane dual converts cut decompositions into perfect-matching decompositions.
The converse uses symmetric difference with a fixed perfect matching.
The equivalence identifies the normality conjecture for $K_5$-minor-free cut polytopes with Seymour's planar edge-colouring conjecture.
In particular, the known cases $k\leq8$ give saturation through height eight.
\end{abstract}
\noindent\textbf{Keywords:} cut polytope; affine semigroup; planar graph; perfect matching; edge-colouring.\par
\smallskip
\noindent\textbf{2020 Mathematics Subject Classification:} 05C15, 05C70, 52B20, 20M25.

\section{Introduction}
All graphs are finite.
Unless specified otherwise, graphs are simple; multigraphs may have parallel edges.
A plane graph is a graph with a fixed embedding in the sphere.
For a graph $G=(V,E)$ and $S\subseteq V$, let $\delta_G(S)$ denote its cut edge set, and let $\chi_{\delta_G(S)}\in\{0,1\}^{E}$ be its incidence vector.
Write
\[
 \Cut(G)=\conv\{\chi_{\delta_G(S)}:S\subseteq V\},\qquad
 A_G=\{(\chi_{\delta_G(S)},1):S\subseteq V\}.
\]
The homogeneous cut semigroup is $\mathsf S_G=\NN A_G$, with lattice $\ZZ A_G$.
For $k\geq1$, put
\begin{align*}
 \Sat_k(G)&=\{x\in\ZZ^E:(x,k)\in\ZZ A_G\cap\cone(A_G)\},\\
 \Dec_k(G)&=\left\{\sum_{i=1}^k\chi_{\delta_G(S_i)}:S_i\subseteq V\right\}.
\end{align*}
We call $G$ \emph{saturated at height $k$} if $\Sat_k(G)=\Dec_k(G)$.
Repetitions and the empty cut are allowed.
The semigroup is normal precisely when this equality holds at every positive height: its height-zero cone slice consists only of the origin.

Sturmfels and Sullivant conjectured that $\mathsf S_G$ is normal if and only if $G$ has no $K_5$ minor \cite[Conjecture~3.7]{SS}.
The implication from normality to minor exclusion is known.
Ohsugi proved that normality is minor closed \cite[Corollary~2.4]{Ohsugi} and reduced the converse to 4-connected plane triangulations \cite[Section~4]{Ohsugi}.

\begin{definition}
A \emph{$k$-graph} is a loopless $k$-regular multigraph $R$ such that
\begin{equation}\label{eq:oddcut}
 |\delta_R(X)|\geq k\quad\text{for every }X\subseteq V(R)\text{ of odd cardinality}.
\end{equation}
Degrees and cut sizes count multiplicities.
The condition applies to all odd vertex sets, including $V(R)$, and implies that every component has even order.
\end{definition}

Let $\mathsf{Sey}_k$ be the assertion that every planar $k$-graph has a proper edge-colouring with $k$ colours.
This is the degree-$k$ form of Seymour's conjecture as stated in \cite[Conjecture~1.1]{CES}.
On a $k$-regular graph such a colouring is a partition of its edges into $k$ perfect matchings.

\begin{theorem}\label{thm:main}
Fix $k\geq1$.
The following statements are equivalent.
\begin{enumerate}[label=\textup{(\roman*)}]
\item Every simple $K_5$-minor-free graph is saturated at height $k$.
\item Every simple planar graph is saturated at height $k$.
\item Every planar $k$-graph is $k$-edge-colourable.
\end{enumerate}
\end{theorem}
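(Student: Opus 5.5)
We prove (i)$\Rightarrow$(ii)$\Rightarrow$(iii)$\Rightarrow$(i). The first implication is trivial, since simple planar graphs are $K_5$-minor-free.

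\emph{From (iii) to (i).} Let $G$ be a $K_5$-minor-free graph. By Ohsugi's minor-closedness argument and his reduction \cite[Section~4]{Ohsugi}, saturation at a fixed height $k$ reduces to $4$-connected plane triangulations $G$. Before using this, we will check that Ohsugi's clique-sum and minor steps respect the height, which we expect to be routine because they glue cut decompositions height by height. So fix a plane triangulation $G$ and $x\in\Sat_k(G)$, and let $G^*$ be its cubic plane dual, with $E(G^*)$ identified with $E$. The conditions defining $\Sat_k$ are cone inequalities plus lattice congruences. For the planar dual pair these should say the following. First, $x\geq0$. Second, $x(C)$ is even on every cycle $C$ of $G$. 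Third, $x(e)\leq\sum_{f\in C\setminus e}x(f)$ for facial triangles $C$, i.e.\ the triangle, or more generally cycle, inequalities (which describe the cone for $K_5$-minor-free graphs, by Barahona--Mahjoub). Fourth, $x(C)\leq 2k$ on triangles, coming from the homogenisation. Cuts of $G$ correspond to even subgraphs (cycle spaces) of $G^*$. Now replace each edge $e$ of $G^*$ by $x(e)$ parallel edges. At each cubic vertex $v$ of $G^*$, with incident multiplicities $a,b,c$, the triangle conditions say $a+b+c\leq 2k$ is even and $a\leq b+c$, etc. We then perform the \emph{triangle expansion}: replace $v$ by a triangle whose three edges carry multiplicities $k-a$, $k-b$, $k-c$, suitably matched. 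This produces a $k$-regular planar multigraph $R$. The parity and cycle inequalities should translate into the odd-cut condition \eqref{eq:oddcut}; this verification is the main obstacle, and we would attack it by uncrossing a minimal violating odd set $X$ until it is a union of expanded triangles, then reading $|\delta_R(X)|<k$ as a violated cycle inequality or parity condition in $G$. Given that $R$ is a planar $k$-graph, (iii) gives $k$ perfect matchings $M_1,\dots,M_k$. In each expanded triangle, a perfect matching meets the three outgoing edge classes in $0$ or $2$ edges, so the image of $M_i$ in $G^*$ is an even subgraph, i.e.\ a cut $\delta_G(S_i)$, and $\sum_i\chi_{\delta_G(S_i)}=x$.

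\emph{From (ii) to (iii).} Let $R$ be a planar $k$-graph. We may assume $R$ is connected. The odd-cut condition, via Edmonds' perfect matching polytope, gives that $\frac1k\chi_R$ lies in the perfect matching polytope, so $R$ has a perfect matching $M$. Next we form a simple planar graph $H$ whose cut structure encodes $R$. Subdivide parallel edges to make things simple, take the plane dual, and triangulate, with weights zero on the added edges. The dual of $R$ turns even subgraphs of $R$ into cuts. Taking symmetric difference with $M$ sends each perfect matching $N$ of $R$ to the even subgraph $N\symd M$. We therefore define $x=\sum_{e}(\text{multiplicity terms})$, chosen so that $x=\sum_i\chi_{N_i\symd M}$ whenever $N_1,\dots,N_k$ is a colouring. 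Concretely, $x(e)=\mu(e)$ for $e\notin M$ and $x(e)=k-\mu(e)$ for $e\in M$.

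We then check that $x\in\Sat_k(H)$. Fractionally, $\frac1k\chi_R$ is a convex combination of perfect matchings by Edmonds, which gives the cone condition with height $k$, and the parity gives the lattice condition. By (ii), $x=\sum_i\chi_{\delta(S_i)}$. Each cut is an even subgraph $D_i$, and the degree constraints should force every $D_i\symd M$ to be a perfect matching of $R$, since at each vertex exactly one incident edge is used per layer. These perfect matchings $N_i=D_i\symd M$ form a $k$-edge-colouring of $R$. Handling the subdivision vertices and the zero-weight triangulating edges, so that they impose no spurious constraints, is the delicate bookkeeping in this direction.
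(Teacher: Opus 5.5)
Your overall architecture matches the paper's, but the triangle expansion in (iii)$\Rightarrow$(i) is wrong as stated. A perfect matching of the expanded graph covers all three vertices of each triangle and can use at most one triangle edge. So it uses one or three external edges there, never zero or two.

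The correct dictionary is therefore the reverse of yours: $e$ lies in the Eulerian set of $G^*$ (the dual cut) exactly when its external edge is \emph{absent}. So external edge $e$ must carry multiplicity $k-x_e$. The degree equations at the three triangle vertices then force the triangle edge joining $(v,b)$ and $(v,c)$ to carry $\tfrac12(-x_a+x_b+x_c)$. This is integral because facial triangles have even sum, and nonnegative by the triangle inequality. Your multiplicities ($x_e$ externally, $k-a,k-b,k-c$ on the triangle) do not even give a $k$-regular graph: for $k=1$ and $x=0$ you get disjoint triangles of degree two.

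The step you call the main obstacle, the odd-cut condition, needs an idea that uncrossing does not supply. After homogenisation, the bijection $C\mapsto P(C)$ between Eulerian sets of $G^*$ and perfect matchings of the expansion extends to an affine map. A convex combination of cuts representing $x/k$, which exists because $x\in\Sat_k(G)$, is therefore carried to a convex combination of perfect matchings equal to the multiplicity vector divided by $k$. Every odd cut then has size at least $k$ immediately.

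Height preservation in Ohsugi's reduction is also not purely formal, and you leave it unchecked. Clique sums need that, for $|Q|\le3$, the edge coordinates on $Q$ determine how many cuts realise each pattern on $Q$. Edge deletion needs an integer value on the deleted edge of the correct parity inside the feasible interval. That requires checking the parity of the cycle bounds when the interval is a single point.

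In (ii)$\Rightarrow$(iii), your switching ($x=\mu$ off $M$, $x=k-\mu$ on $M$) is exactly the paper's, but your passage to a simple planar graph fails. Subdividing edges of $R$ creates edges in series, whose duals are parallel, so it does not simplify the dual. It also breaks the regularity your final degree count relies on. Triangulating with weight zero on the new edges can violate the cone and lattice conditions. For example, if $u,w,v$ is a path with $x_{uw}=k$ and $x_{wv}=0$, every cut in any decomposition separates $u$ from $v$, so a new edge $uv$ would have to carry $k$. Triangulation is unnecessary anyway, since (ii) covers all simple planar graphs.

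What is needed instead is this:
\begin{itemize}
\item Take the underlying simple graph of $R$ and its plane dual, allowing loops and parallel edges.
\item Since $x/k$ is a convex combination of dual cuts, $x$ vanishes on loops and is constant on parallel classes.
\item Delete loops and keep one edge per class; every cut decomposition of the simplified vector lifts back.
\end{itemize}
Finally, $D_i\symd M$ is a perfect matching because it has odd degree at every vertex while these degrees, summed over $i$, equal $k$. Your ``exactly one incident edge per layer'' is the conclusion, not the argument.
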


Laso\'n and Micha\l{}ek proved the equivalence between height-three saturation for planar graphs and the four-colour theorem \cite[Theorem~4]{LM}.
They also established planar saturation at heights one and two \cite[Corollary~5]{LM}.
To treat arbitrary $k$, we first prove the result for plane triangulations and then apply Ohsugi's reduction, preserving the height at each step.

\section{Lattices, duality and perfect matchings}\label{sec:prelim}
For an edge vector $x$ and an edge set $F$, write $x(F)=\sum_{e\in F}x_e$.
An edge set is \emph{Eulerian} if its spanning subgraph has even degree at every vertex.
Write $\EC(H)$ for the set of incidence vectors of Eulerian edge sets of $H$.

\begin{lemma}[{\cite[equation~(1)]{Ohsugi}}]\label{lem:lattice}
For a simple graph $G$, $x\in\ZZ^{E(G)}$ and $k\in\ZZ$, the following are equivalent:
\begin{enumerate}[label=\textup{(\roman*)}]
\item $(x,k)\in\ZZ A_G$;
\item $x(C)$ is even for every cycle $C$ of $G$;
\item $x\bmod2$ is a cut vector over $\mathbb F_2$.
\end{enumerate}
For $k\geq1$, also
\begin{equation}\label{eq:coneslice}
 (x,k)\in\cone(A_G)\quad\Longleftrightarrow\quad x/k\in\Cut(G).
\end{equation}
\end{lemma}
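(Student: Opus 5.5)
The lemma has two parts: the lattice description (i)–(iii), and the cone-slice identity \eqref{eq:coneslice}. I would prove them separately.

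For the lattice part, the plan is the cycle of implications (i)$\Rightarrow$(ii)$\Rightarrow$(iii)$\Rightarrow$(i).

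\emph{(i)$\Rightarrow$(ii).} Every cut meets every cycle in an even number of edges. Hence $\chi_{\delta_G(S)}(C)$ is even for every generator. Since evenness of $x(C)$ is preserved under integer combinations, it holds for every $(x,k)\in\ZZ A_G$.

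\emph{(ii)$\Rightarrow$(iii).} This is the standard fact that over $\mathbb F_2$ the cut space is the orthogonal complement of the cycle space, and the cycle space is spanned by incidence vectors of cycles. Concretely, I would fix a spanning forest and choose $S$ by propagating parities along tree paths from a root in each component. This gives $x\equiv\chi_{\delta_G(S)}$ on tree edges. Evenness on fundamental cycles then forces the same congruence on every non-tree edge.

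\emph{(iii)$\Rightarrow$(i).} This is the step that needs actual construction, since $k$ is an arbitrary integer and the lattice has the extra height coordinate. The key steps, in order:
\begin{enumerate}[label=\textup{(\alph*)}]
\item Note that $(0,1)=(\chi_{\delta(\emptyset)},1)\in A_G$, so the height coordinate can be adjusted freely. It therefore suffices to show $(x,0)\in\ZZ A_G$ whenever $x\bmod 2$ is a cut vector.
\item Write $x=\chi_{\delta(S)}+2y$ with $y\in\ZZ^E$. The term $(\chi_{\delta(S)},1)-(0,1)$ lies in the lattice.
\item It then suffices to show $(2\chi_e,0)\in\ZZ A_G$ for every single edge $e=uv$. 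For this I would use the identity
\[
\chi_{\delta(\{u\})}+\chi_{\delta(\{v\})}-\chi_{\delta(\{u,v\})}=2\chi_e,
\]
which holds because edges from $u$ or $v$ to other vertices cancel, while $e$ is counted twice. The identity uses three generators on each side of the height count, so combined with $(0,1)$ it gives $(2\chi_e,0)$.
\end{enumerate}

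For \eqref{eq:coneslice}, write $(x,k)=\sum\lambda_S(\chi_{\delta(S)},1)$ with $\lambda_S\geq0$. Comparing last coordinates gives $\sum\lambda_S=k$, so $x/k=\sum(\lambda_S/k)\chi_{\delta(S)}$ is a convex combination of cut vectors. Conversely, a convex combination scaled by $k$ gives a nonnegative combination with the correct height coordinate.

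The only step I expect to need real care is the even-edge identity in (iii)$\Rightarrow$(i), together with the bookkeeping of the height coordinate via the empty cut. The rest is standard linear algebra over $\mathbb F_2$.
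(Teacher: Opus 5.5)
Your proposal is correct and follows the paper's proof essentially step for step: the same parity argument for (i)$\Rightarrow$(ii), and the same spanning-forest labelling for (ii)$\Rightarrow$(iii). It also uses the same identity $2\chi_e=\chi_{\delta(\{u\})}+\chi_{\delta(\{v\})}-\chi_{\delta(\{u,v\})}$ with the empty cut correcting the height for (iii)$\Rightarrow$(i), and the same rescaling by $k$ for the cone slice. Your phrase ``three generators on each side of the height count'' is garbled; the point is that this lifted combination has height $1+1-1=1$, so subtracting $(0,1)$ gives $(2\chi_e,0)$.
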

\begin{proof}
A cut meets each cycle in an even number of edges, proving (i)$\Rightarrow$(ii).
To prove (ii)$\Rightarrow$(iii), choose a spanning forest and label a root in each component by zero.
Label any other vertex by the sum modulo two of the edge coordinates along its forest path from the root.
The fundamental-cycle conditions show that the labels at the endpoints of every edge differ by $x_e\bmod2$.
The vertices labelled one define the required cut.

For (iii)$\Rightarrow$(i), let $d$ be this cut vector, so $x-d\in2\ZZ^{E(G)}$.
Writing $\mathbf e_e$ for the coordinate vector of an edge $e=uv$, we have
\[
 2\mathbf e_e=\chi_{\delta_G(\{u\})}
 +\chi_{\delta_G(\{v\})}-\chi_{\delta_G(\{u,v\})}.
\]
The corresponding combination of lifted cuts has height one; subtracting the empty lifted cut makes its height zero.
Hence $(2\mathbf e_e,0)\in\ZZ A_G$.
Writing $x-d=2z$, we obtain
\[
 (x,k)=(d,1)+\sum_{e\in E(G)}z_e(2\mathbf e_e,0)+(k-1)(0,1)\in\ZZ A_G.
\]
Equation~\eqref{eq:coneslice} follows by dividing the coefficients of a conic combination by their sum $k$.
\end{proof}

\begin{lemma}[Plane cut--cycle duality]\label{lem:duality}
Let $H$ be a connected loopless plane multigraph and let $G_0=H^*$ be its dual, possibly with loops and parallel edges.
Under the edge correspondence, cut edge sets of $G_0$ are exactly Eulerian edge sets of $H$.
\end{lemma}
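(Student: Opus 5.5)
The plan is to prove both inclusions through the standard planar correspondence between faces and dual vertices, using the fact that $H$ is connected and sits in the sphere. Fix the embedding of $H$. Each face $f$ of $H$ is a vertex $f^*$ of $G_0$, and each edge $e$ of $H$ corresponds to the edge $e^*$ of $G_0$ joining the faces on the two sides of $e$; $e^*$ is a loop exactly when the same face lies on both sides of $e$, i.e.\ when $e$ is a bridge of $H$. Because $H$ is connected, $G_0$ is connected and $(G_0)^*=H$: the vertices of $H$ are the faces of $G_0$, and the edges of $G_0$ around the face $v^*$ are exactly the edges of $H$ incident with $v$.

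\emph{Cuts are Eulerian.} Let $S$ be a set of faces of $H$ and let $F=\{e:e^*\in\delta_{G_0}(S)\}$. An edge $e$ belongs to $F$ exactly when one side of $e$ lies in $S$ and the other does not; bridges never do. Fix a vertex $v$ of $H$ and go once around it through the rotation of its incident edges. Between consecutive edge-ends we pass through a face, and crossing an edge $e$ changes membership in $S$ exactly when $e\in F$. After a full turn we are back in the starting face, so the number of crossings of $F$ is even. Each edge contributes one crossing per end at $v$, so this number is $\deg_F(v)$. Hence $F$ is Eulerian.

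\emph{Eulerian sets are cuts.} Let $F$ be Eulerian in $H$. We build the set $S$ of faces through a labelling, as in the proof of Lemma~\ref{lem:lattice}. We want $\lambda:F(H)\to\mathbb F_2$ with $\lambda(f)+\lambda(g)=[e\in F]$ for every edge $e$ separating $f$ from $g$; then $S=\lambda^{-1}(1)$ works. For edges that are loops of $G_0$ (bridges of $H$) the condition reads $[e\in F]=0$. This holds because a bridge lies in no cycle, and an Eulerian set decomposes into edge-disjoint cycles.

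Next, assign $1$ to the edges $e^*$ with $e\in F$, and define $\lambda$ by parities along paths in $G_0$ from a root face. By the argument of Lemma~\ref{lem:lattice}, (ii)$\Rightarrow$(iii), consistency amounts to every cycle $D$ of $G_0$ containing an even number of edges $e^*$ with $e\in F$. To check this, use that $H$ is the dual of $G_0$ on the sphere. By the first half applied to $G_0$ (connected, with possible loops, which the rotation argument tolerates), the cycles of $G_0$ span the Eulerian space of $G_0$, and the duals of cuts of $H$ are Eulerian in $G_0$. The step I expect to carry the weight is the reverse containment: the cycle space of $G_0$ is generated by the duals of vertex stars $\delta_H(\{v\})$, i.e.\ by facial boundaries of $G_0$. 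This follows from the Jordan curve theorem: a cycle $D$ of $G_0$ separates the sphere, and $D$ is the sum of the face boundaries $v^*$ over the faces of $G_0$ on one side. Granting this, $|D\cap F^*|\equiv\sum_v|\delta_H(v)\cap F|=\sum_v\deg_F(v)\equiv0\pmod 2$. So $\lambda$ exists, and $F$ is the cut $\delta_{G_0}(\lambda^{-1}(1))$.

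A dimension count confirms the second half without Jordan. Let $n=|V(H)|$, $m=|E(H)|$ and let $\phi$ be the number of faces. The cut space of the connected graph $G_0$ has dimension $\phi-1$. The Eulerian space of $H$ has dimension $m-n+1$, and Euler's formula gives $m-n+1=\phi-1$. The first half embeds the cut space injectively into the Eulerian space, since the correspondence is a bijection on edges, so the two spaces coincide.
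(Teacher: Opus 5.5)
Your proof is correct, and its final dimension-count paragraph is essentially the paper's argument. Both show that dual cuts are Eulerian in $H$ (the paper via even face boundaries, you via the rotation around each vertex of $H$), then get equality from $\dim(\text{cut space of }G_0)=|F(H)|-1=|E(H)|-|V(H)|+1$ by Euler's formula, so your labelling/Jordan-curve paragraph is redundant and could be dropped.
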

\begin{proof}
Over $\mathbb F_2$, the boundary of each face of $H$ is an even edge set, with repeated traversals cancelled.
It is the cut of the corresponding dual vertex.
These boundaries span the cycle space: the dual is connected, so their span has dimension $|F(H)|-1$, and Euler's formula identifies this number with $|E(H)|-|V(H)|+1$, the dimension of the cycle space of $H$.
Dual cuts are sums modulo two of dual vertex cuts.
They therefore correspond to precisely the Eulerian edge sets of $H$.
In particular, bridges of $H$ correspond to dual loops and belong to none of these edge sets.
\end{proof}

For a simple graph $H$, let $\PM(H)$ be the convex hull of its perfect-matching incidence vectors.
Edmonds's theorem gives
\begin{equation}\label{eq:edmonds}
 \PM(H)=\{y\in\RR^{E(H)}:y\geq0,\ y(\delta_H(v))=1\ (v\in V(H)),\
 y(\delta_H(X))\geq1\ (|X|\text{ odd})\}.
\end{equation}
Indeed, Edmonds describes the matching polytope by nonnegativity, vertex sums at most one, and $y(E_H(X))\leq(|X|-1)/2$ for odd $X$ \cite[Section~2, Theorem~(P), pp.~125--126]{Edmonds}.
On the face where every vertex sum is one,
\[
 2y(E_H(X))+y(\delta_H(X))=|X|.
\]
The odd-set inequalities are therefore equivalent to those in \eqref{eq:edmonds}.
Since every matching in a convex combination on this face is perfect, the face is $\PM(H)$.

Thus a loopless multigraph with underlying simple graph $H$ and multiplicity vector $m$ is a $k$-graph if and only if $m/k\in\PM(H)$.

\section{Triangle expansion}\label{sec:gadget}
Let $H$ be a loopless cubic multigraph.
Replace each vertex $v$, with incident edges $a,b,c$, by a triangle with vertices $(v,a),(v,b),(v,c)$.
For each original edge $e=uv$, add the \emph{external} edge $(u,e)(v,e)$.
The triangle edges are \emph{internal}.
Denote the resulting graph by $\Phi(H)$.
If $H$ is plane, use the cyclic order of its incidences to obtain a plane embedding of $\Phi(H)$.

\begin{lemma}\label{lem:bijection}
There is a bijection $C\mapsto P(C)$ between Eulerian edge sets of $H$ and perfect matchings of $\Phi(H)$.
\end{lemma}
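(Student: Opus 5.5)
The plan is to define $P(C)$ explicitly and verify that the map is well defined and bijective, working vertex by vertex. In a cubic multigraph every vertex $v$ with incident edges $a,b,c$ meets an Eulerian set $C$ in either zero or two edges. Loops do not occur, since $H$ is loopless. Parallel edges cause no trouble, because the vertices $(v,a),(v,b),(v,c)$ are indexed by edge incidences.

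I would set
\[
 P(C)=\{(u,e)(v,e): e=uv\notin C\}\ \cup\ \{\text{internal edges } (v,a)(v,b): a,b\in C,\ a,b\ni v\}.
\]
In words: take the external edges of the edges \emph{not} in $C$. Then, at each vertex $v$ where $C$ uses two edges $a,b$, add the triangle edge joining $(v,a)$ and $(v,b)$.

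To check that $P(C)$ is a perfect matching, fix a triangle at $v$. If $C$ misses $v$, the three external edges cover its three vertices and no internal edge is chosen. If $C$ uses $a,b$ at $v$, then $(v,c)$ is covered by its external edge. The vertices $(v,a)$ and $(v,b)$ have their external edges excluded, and they are covered by the single chosen internal edge. Every vertex of $\Phi(H)$ lies in exactly one triangle, so each vertex is covered exactly once.

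For the inverse, let $M$ be a perfect matching of $\Phi(H)$ and put $C(M)=\{e\in E(H): \text{the external edge of } e\notin M\}$. The key local observation is this. A triangle has three vertices and a matching uses at most one triangle edge. So either $M$ uses no internal edge of the triangle at $v$, in which case all three externals lie in $M$; or it uses exactly one internal edge, in which case exactly one external edge lies in $M$ (by parity, since $3$ is odd).

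Hence the number of edges of $C(M)$ at $v$ is $0$ or $2$, so $C(M)$ is Eulerian. Moreover, $M$ is determined by its set of external edges: in the second case the internal edge must be the one joining the two uncovered triangle vertices. This shows that $C\mapsto P(C)$ and $M\mapsto C(M)$ are mutually inverse. The external edges of $P(C)$ are exactly those of edges outside $C$, and the internal part is forced.

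The main obstacle is only bookkeeping: stating the correspondence with edges \emph{not} in $C$ rather than edges in $C$. The parity count inside each triangle is the one point needing care. If $H$ has parallel edges $a,b$ between $u$ and $v$, the construction still works because the triangle vertices are indexed by the pairs $(v,e)$. Planarity plays no role here; it only matters later, for the embedding of $\Phi(H)$.
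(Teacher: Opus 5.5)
Your proposal is correct and matches the paper's proof: the same map (include the external edge of $e$ exactly when $e\notin C$, with the internal edge forced at vertices where $C$ has degree two), and the same inverse via the count of externally matched vertices in each triangle being one or three. Your extra remarks, that a perfect matching is determined by its external edges and that parallel edges cause no trouble because triangle vertices are indexed by incidences, are also correct.
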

\begin{proof}
An Eulerian edge set $C$ has degree zero or two at each vertex of $H$.
Include the external edge of $e$ precisely when $e\notin C$.
At a vertex where $C$ uses two edges, include the internal edge joining their two incidence vertices.
If $C$ uses no edge there, all three incidence vertices are matched externally.
This defines a perfect matching.

Conversely, in a perfect matching the number of externally matched vertices of each triangle is odd, hence one or three.
In the first case the other two vertices are joined by their internal edge; in the second there is no internal edge.
Declaring $e\in C$ exactly when its external edge is absent gives degree two or zero at every original vertex.
The two constructions are inverse.
\end{proof}

Fix $k\geq1$ and an integral edge vector $p$ satisfying
\begin{equation}\label{eq:admissible}
 p/k\in\conv\EC(H),\qquad p(\delta_H(v))\equiv0\pmod2\quad(v\in V(H)).
\end{equation}
At a vertex with incident edges $a,b,c$, define
\begin{align}
 \lambda_0(v)&=k-\tfrac12(p_a+p_b+p_c),\label{eq:lambda0}\\
 \lambda_a(v)&=\tfrac12(-p_a+p_b+p_c),\quad
 \lambda_b(v)=\tfrac12(p_a-p_b+p_c),\quad
 \lambda_c(v)=\tfrac12(p_a+p_b-p_c).\label{eq:lambda}
\end{align}
The parity condition makes these numbers integral.
To see that they are nonnegative, choose an Eulerian set at random with expected incidence vector $p/k$.
At $v$, the possible edge sets are $\varnothing$, $\{b,c\}$, $\{a,c\}$ and $\{a,b\}$.
Solving for their probabilities from the three marginals and total probability one gives respectively $\lambda_0/k$, $\lambda_a/k$, $\lambda_b/k$ and $\lambda_c/k$.

Define a multigraph $R(H,p,k)$ on the vertices of $\Phi(H)$ by giving external edge $e$ multiplicity $k-p_e$, and the internal edge $(v,b)(v,c)$ multiplicity $\lambda_a(v)$, cyclically.
Omit edges of multiplicity zero.

\begin{proposition}\label{prop:gadget}
If $H$ is plane, loopless and cubic and \eqref{eq:admissible} holds, then $R(H,p,k)$ is a planar $k$-graph.
Moreover,
\begin{equation}\label{eq:local-equivalence}
 p=\sum_{i=1}^k\chi_{C_i}\text{ with each }C_i\text{ Eulerian}
 \quad\Longleftrightarrow\quad R(H,p,k)\text{ is }k\text{-edge-colourable}.
\end{equation}
\end{proposition}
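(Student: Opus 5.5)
Both parts rest on two facts: Lemma~\ref{lem:bijection}, which identifies Eulerian sets of $H$ with perfect matchings of $\Phi(H)$, and the closing remark of Section~\ref{sec:prelim}, which says that a multigraph with underlying simple graph $\Phi(H)$ and multiplicity vector $m$ is a $k$-graph exactly when $m/k\in\PM(\Phi(H))$. The map $C\mapsto P(C)$ is affine in incidence vectors. The external coordinate of $e$ is $1-\chi_C(e)$. The internal coordinate of the edge $(v,b)(v,c)$ is $\tfrac12(\chi_C(b)+\chi_C(c)-\chi_C(a))$, which is $1$ exactly when $C$ uses $b,c$ at $v$ and $0$ otherwise (since $C$ has degree $0$ or $2$ at $v$). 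Hence if $p=\sum_i\chi_{C_i}$, then $\sum_i\chi_{P(C_i)}$ has external entries $k-p_e$ and internal entries $\lambda_a(v)$, so it equals the multiplicity vector $m$ of $R(H,p,k)$.

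\emph{Step 1: $R(H,p,k)$ is a planar $k$-graph.} Planarity is immediate, since $R(H,p,k)$ is a submultigraph of the plane graph $\Phi(H)$ with parallel copies added inside thin lenses. Integrality and nonnegativity of $m$ come from $p_e\le k$ (because $p/k$ lies in the $0/1$ hull) and from the parity and probabilistic argument already given for the $\lambda$'s. Write $p/k=\sum_j\mu_j\chi_{C_j}$ as a convex combination of Eulerian sets. Applying the affine map gives $m/k=\sum_j\mu_j\chi_{P(C_j)}\in\PM(\Phi(H))$. By the remark, $R(H,p,k)$ is then a $k$-graph: loopless, $k$-regular, and satisfying \eqref{eq:oddcut}. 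The only thing to watch is that $\Phi(H)$ may have parallel external edges when $H$ has parallel edges. I would handle this by running Edmonds on the underlying simple graph, where parallel copies merge, or simply note that the remark applies verbatim to $\Phi(H)$ regarded as a multigraph with distinguished edges.

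\emph{Step 2: ($\Rightarrow$) of \eqref{eq:local-equivalence}.} Given $p=\sum_{i=1}^k\chi_{C_i}$, the computation above gives $m=\sum_i\chi_{P(C_i)}$. Each edge of $R(H,p,k)$ with multiplicity $m_f$ therefore lies in exactly $m_f$ of the matchings $P(C_i)$. Assign its copies to those indices; each colour class is then a copy of $P(C_i)$ and hence a perfect matching, so this is a $k$-edge-colouring.

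\emph{Step 3: ($\Leftarrow$) of \eqref{eq:local-equivalence}.} A $k$-edge-colouring of the $k$-regular multigraph $R(H,p,k)$ splits its edges into $k$ perfect matchings. Projecting each colour class to $\Phi(H)$ gives perfect matchings $M_1,\dots,M_k$ with $\sum_i\chi_{M_i}=m$. Put $C_i=P^{-1}(M_i)$, which is Eulerian by Lemma~\ref{lem:bijection}. Reading off external coordinates, $k-p_e=\sum_i(1-\chi_{C_i}(e))$, so $p=\sum_i\chi_{C_i}$.

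The main obstacle is bookkeeping rather than ideas: one must verify that the internal coordinate formula is correct in all four local cases, so that the map is genuinely affine, and treat parallel edges of $H$ carefully when invoking Edmonds's description \eqref{eq:edmonds}. Everything else follows directly from Lemma~\ref{lem:bijection}.
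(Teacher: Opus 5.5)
Your proof is correct and follows the paper's argument. You push a convex combination of Eulerian sets through Lemma~\ref{lem:bijection} to get $m/k\in\PM(\Phi(H))$, and you prove both directions of \eqref{eq:local-equivalence} by matching colour classes with the matchings $P(C_i)$, as the paper does; your explicit affine formula for the internal coordinates is the linear map $T$ that the paper uses in Corollary~\ref{cor:semigroup}. Your parallel-edge worry is moot: $\Phi(H)$ is simple whenever $H$ is loopless, since parallel edges $e,f$ give the distinct external pairs $(u,e)(v,e)$ and $(u,f)(v,f)$, and in any case you only need the easy direction of the $\PM$ characterization.
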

\begin{proof}
All multiplicities are nonnegative integers, since also $0\leq p_e\leq k$.
At $(v,a)$ the total degree is
\[
 (k-p_a)+\lambda_b(v)+\lambda_c(v)=k.
\]
Planarity follows from the construction of $\Phi(H)$.
Apply Lemma~\ref{lem:bijection} to a convex combination representing $p/k$.
The expected external-edge incidence is $1-p_e/k$, and the expected internal-edge incidence opposite $a$ is $\lambda_a(v)/k$.
The multiplicity vector of $R$, divided by $k$, is therefore a convex combination of perfect matchings of $\Phi(H)$.
Since each perfect matching crosses every odd vertex cut, the multiplicity of every such cut in $R$ is at least $k$.

If $p=\sum_i\chi_{C_i}$, the matchings $P(C_i)$ use each external edge type exactly $k-p_e$ times.
Their local pattern counts are uniquely determined by \eqref{eq:lambda0}--\eqref{eq:lambda}, so each internal edge type is also used with its specified multiplicity.
Assign distinct parallel copies to these occurrences.
This gives a $k$-edge-colouring of $R$.

Conversely, the colour classes of a $k$-edge-colouring of $R$ are perfect matchings.
Project these matchings to $P_1,\ldots,P_k$ in $\Phi(H)$, and let $C_i$ correspond to $P_i$ under Lemma~\ref{lem:bijection}.
Exactly $k-p_e$ matchings use the external edge of $e$, so exactly $p_e$ of the sets $C_i$ contain $e$.
This proves \eqref{eq:local-equivalence}.
\end{proof}

\begin{corollary}\label{cor:semigroup}
For any loopless cubic multigraph $H$, the correspondence in Lemma~\ref{lem:bijection} extends to an isomorphism of graded affine semigroups
\[
 \NN\{(\chi_C,1):C\text{ Eulerian in }H\}
 \ \cong\ 
 \NN\{(\chi_P,1):P\text{ a perfect matching of }\Phi(H)\}.
\]
The induced maps on the generated lattices, cones and semigroup algebras are also isomorphisms.
\end{corollary}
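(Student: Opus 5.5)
The plan is to realise the isomorphism by a linear map on the ambient spaces that carries generators to generators bijectively. Everything else then follows formally.

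Write $\mathcal A=\{(\chi_C,1)\}$ for the Eulerian generators of $H$ and $\mathcal B=\{(\chi_P,1)\}$ for the perfect-matching generators of $\Phi(H)$. The proof of Lemma~\ref{lem:bijection} already suggests the map. Define $L:\RR^{E(H)}\times\RR\to\RR^{E(\Phi(H))}\times\RR$ by sending $(x,t)$ to the vector whose external coordinate at $e$ is $t-x_e$. At a vertex $v$ with incident edges $a,b,c$, the internal edge opposite $a$ receives $\tfrac12(-x_a+x_b+x_c)$, following \eqref{eq:lambda}, and the height coordinate stays $t$.

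The first step is to check that $L(\chi_C,1)=(\chi_{P(C)},1)$ for every Eulerian $C$. The external coordinates are right by construction. At $v$ the local pattern of $C$ is $\varnothing$, $\{b,c\}$, $\{a,c\}$ or $\{a,b\}$, and the formula returns $1$ exactly on the internal edge used by $P(C)$ and $0$ elsewhere. This is the same four-case computation as in the derivation of \eqref{eq:lambda}.

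The second step is injectivity. There is a left inverse $M$ that recovers $t$ from the last coordinate and $x_e=t-y_e$ from the external coordinate $y_e$. So $L$ is injective, and its restriction to $\mathcal A$ is the bijection of Lemma~\ref{lem:bijection} onto $\mathcal B$.

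A linear injective map that restricts to a bijection $\mathcal A\to\mathcal B$ induces the remaining isomorphisms, and it preserves the height coordinate, so all of them are graded.
\begin{itemize}[label={}]
\item Semigroups: $\NN\mathcal A\to\NN\mathcal B$ is surjective, since $L(\sum n_C a_C)=\sum n_C b_{P(C)}$, and injective because $L$ is.
\item Lattices: the same argument gives $\ZZ\mathcal A\cong\ZZ\mathcal B$.
\item Cones: the same argument gives $\cone\mathcal A\cong\cone\mathcal B$.
\item Semigroup algebras: $K[\NN\mathcal A]\cong K[\NN\mathcal B]$ follows from the semigroup isomorphism.
\end{itemize}
Since $L$ maps the generators of the linear span of $\mathcal A$ onto those of $\mathcal B$, its restriction $\operatorname{span}\mathcal A\to\operatorname{span}\mathcal B$ is a linear isomorphism. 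Combined with $\ZZ\mathcal A\cong\ZZ\mathcal B$, this lets saturation questions transfer in both directions.

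The main obstacle is that the internal coordinates involve halves, so $L$ is not integral on all of $\ZZ^{E(H)}\times\ZZ$. We must make sure the lattice statement concerns only the generated lattices $\ZZ\mathcal A$ and $\ZZ\mathcal B$, not the ambient integer lattices. This is handled by the observation that $L$ maps $\mathcal A$ to the integral vectors $\mathcal B$, so it sends $\ZZ\mathcal A$ onto $\ZZ\mathcal B$, and injectivity gives the isomorphism. Loops do not arise because $H$ is loopless, so each vertex has three distinct incidence slots. Parallel edges are harmless because each incidence is its own vertex of $\Phi(H)$.
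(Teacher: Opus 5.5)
Your proposal is correct and takes the same route as the paper. The paper also defines the linear map $T(p,k)=(m(p,k),k)$ with external coordinates $k-p_e$ and internal coordinates from \eqref{eq:lambda}, checks $T(\chi_C,1)=(\chi_{P(C)},1)$ via Lemma~\ref{lem:bijection}, recovers $p$ from the external coordinates to get injectivity, and reads off the bijections on semigroups, generated lattices, cones and (via the monomial map) semigroup algebras; your remark that the half-integral coefficients are harmless because only the generated lattices are involved makes explicit a point the paper leaves implicit.
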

\begin{proof}
Let $T(p,k)=(m(p,k),k)$, with external coordinates $m_e=k-p_e$ and internal coordinates given by \eqref{eq:lambda}.
These expressions are linear in $(p,k)$.
Lemma~\ref{lem:bijection} gives $T(\chi_C,1)=(\chi_{P(C)},1)$.
The inverse on the image recovers $p_e=k-m_e$ from external coordinates.
Hence $T$ restricts to bijections on the semigroups, their integer spans and their real cones.
It preserves addition and height.
For any field $\mathbb F$, the monomial map $X^{(p,k)}\mapsto Y^{T(p,k)}$ gives the corresponding isomorphism of semigroup algebras over $\mathbb F$.
\end{proof}

If $H$ is connected and plane, duality identifies the first semigroup with the homogeneous cut semigroup of $H^*$, allowing dual loops and parallel edges.

\section{From edge-colouring to cut saturation}\label{sec:forward}
\begin{lemma}\label{lem:triangulation}
Assume $\mathsf{Sey}_k$.
Every simple plane triangulation on at least three vertices is saturated at height $k$.
\end{lemma}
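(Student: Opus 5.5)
Let $G$ be a simple plane triangulation on $n\geq3$ vertices, and fix $x\in\Sat_k(G)$. We must write $x$ as a sum of $k$ cut vectors. The strategy is to pass to the dual and apply Proposition~\ref{prop:gadget}.

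\textbf{Setting up the dual.} Let $H=G^*$. Since $G$ is a triangulation, every face is a triangle, so $H$ is cubic. Since $G$ is simple with $n\geq3$, $G$ is $2$-connected and has no bridges, so $H$ is loopless. The graph $H$ is connected and plane, and $H^*=G$ under the standard correspondence. (For $n=3$, $H$ is the theta multigraph; it is loopless and cubic, which is all Proposition~\ref{prop:gadget} needs.) Identify $E(G)$ with $E(H)$. By Lemma~\ref{lem:duality} applied to $H$, whose dual is $G$, cut edge sets of $G$ are exactly Eulerian edge sets of $H$. Hence $\Cut(G)=\conv\EC(H)$ under this identification, and $\Dec_k(G)$ is the set of sums of $k$ Eulerian incidence vectors of $H$.

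\textbf{Verifying \eqref{eq:admissible} for $p=x$.} By Lemma~\ref{lem:lattice}, equation \eqref{eq:coneslice}, the condition $x\in\Sat_k(G)$ gives $x/k\in\Cut(G)=\conv\EC(H)$. For the parity condition, take $v\in V(H)$. It corresponds to a triangular face of $G$ bounded by a $3$-cycle $C$, and $\delta_H(v)$ corresponds to the edges of $C$. By Lemma~\ref{lem:lattice}(ii), $x(C)$ is even, which is exactly $p(\delta_H(v))\equiv0\pmod 2$. Integrality of $x$ holds by definition. So \eqref{eq:admissible} holds.

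\textbf{Conclusion.} By Proposition~\ref{prop:gadget}, $R(H,x,k)$ is a planar $k$-graph. By $\mathsf{Sey}_k$ it is $k$-edge-colourable. By \eqref{eq:local-equivalence}, $x=\sum_{i=1}^k\chi_{C_i}$ with each $C_i$ Eulerian in $H$. By Lemma~\ref{lem:duality}, each $C_i$ is a cut of $G$, so $x\in\Dec_k(G)$. The reverse inclusion $\Dec_k(G)\subseteq\Sat_k(G)$ is immediate.

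\textbf{Main obstacle.} The delicate point is the duality bookkeeping. One must check that $H$ is loopless and cubic (which uses simplicity and $n\geq3$) and that the face boundaries of $G$ are exactly the vertex stars $\delta_H(v)$, so that cycle parity on triangles gives the vertex-parity condition. Everything else is a direct chaining of earlier results.
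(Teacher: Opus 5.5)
Your proof is correct and follows the paper's argument. You pass to the loopless cubic dual $H=G^*$, obtain \eqref{eq:admissible} from Lemmas~\ref{lem:lattice} and \ref{lem:duality} (even sums on the facial triangles give even sums on the vertex stars of $H$), then apply Proposition~\ref{prop:gadget} together with $\mathsf{Sey}_k$ and dualize back; your extra checks (looplessness from the absence of bridges, the $n=3$ theta case, and the trivial inclusion $\Dec_k(G)\subseteq\Sat_k(G)$) only spell out details the paper leaves implicit.
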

\begin{proof}
Let $G$ be such a triangulation and $H=G^*$ its loopless cubic dual.
For $x\in\Sat_k(G)$, transfer coordinates to a vector $p$ on $H$.
Lemmas~\ref{lem:lattice} and \ref{lem:duality} give \eqref{eq:admissible}.
Proposition~\ref{prop:gadget} and $\mathsf{Sey}_k$ give a sum of $k$ Eulerian sets with total $p$.
Duality converts it to a sum of $k$ cuts with total $x$.
\end{proof}

Ohsugi's clique-sum and edge-deletion arguments \cite[Theorems~3.2 and~2.3]{Ohsugi} preserve saturation at a fixed height, as the next two lemmas show.

\begin{lemma}[Clique sums]\label{lem:clique}
Suppose $G_1$ and $G_2$ intersect in a complete graph on a vertex set $Q$ of size at most three, and $G=G_1\cup G_2$ with no additional cross edges.
If $G_1$ and $G_2$ are saturated at height $k$, so is $G$.
\end{lemma}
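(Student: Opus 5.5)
Take $x\in\Sat_k(G)$. We must write $x$ as a sum of $k$ cut vectors of $G$. The natural route is to restrict $x$ to the two sides, decompose each side using saturation, and glue the decompositions along the clique $Q$.

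\textbf{Step 1: restrictions lie in $\Sat_k(G_i)$.} Write $x^{(i)}$ for the restriction of $x$ to $E(G_i)$. Every cut of $G$ restricts to a cut of $G_i$. Hence $x/k\in\Cut(G)$ gives $x^{(i)}/k\in\Cut(G_i)$ by projecting the convex combination. Lattice membership also passes to the restriction: by Lemma~\ref{lem:lattice}(ii), every cycle of $G_i$ is a cycle of $G$, so $x^{(i)}(C)$ is even for each such cycle. Equation~\eqref{eq:coneslice} then gives $x^{(i)}\in\Sat_k(G_i)$. By hypothesis there are cuts $\delta_{G_i}(S^{(i)}_1),\dots,\delta_{G_i}(S^{(i)}_k)$ summing to $x^{(i)}$.

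\textbf{Step 2: gluing.} Two cuts $\delta_{G_1}(S)$ and $\delta_{G_2}(T)$ glue to a cut of $G$ exactly when they induce the same cut on the clique $K_Q$. That is, $S\cap Q$ and $T\cap Q$ must agree up to complementation within $Q$. Since the cut sets may be complemented freely, we may then replace $T$ by its complement so that $S\cap Q=T\cap Q$, and take $S\cup T$. Every cut of $G$ arises this way, because $G$ has no cross edges.

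So we need a bijection between the two lists of $k$ cuts that matches the induced cuts on $Q$. This holds if, for each cut $D$ of $K_Q$, the number of indices $j$ with $\delta_{G_1}(S^{(1)}_j)\cap E(Q)=D$ equals the corresponding number for $G_2$.

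\textbf{Step 3 (main obstacle): the multiplicities on $Q$ are forced.} For $|Q|\le 3$, the cut vectors of $K_Q$ are affinely independent. For $|Q|=3$ they are $0$, $(1,1,0)$, $(1,0,1)$ and $(0,1,1)$ on the three edges, which are four affinely independent points in $\RR^3$. For smaller $Q$ the check is trivial. Therefore the multiset of $k$ restricted cuts is uniquely determined by its sum $x|_{E(Q)}$ together with the count $k$. The counts are given by formulas analogous to \eqref{eq:lambda0}--\eqref{eq:lambda} applied to the triangle's coordinates.

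Both decompositions have the same sum on $E(Q)$, namely $x|_{E(Q)}$, and both have exactly $k$ terms. Their pattern counts therefore coincide, and the matching from Step 2 exists. Summing the glued cuts gives $x$, because every edge of $G$ lies in some $G_i$ and the edges of $E(Q)$ are counted consistently on both sides. This shows $x\in\Dec_k(G)$.

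The point that needs care is the affine independence for $|Q|=3$. It is exactly what fails for $|Q|=4$, and it is why the clique bound of three is needed. The rest is bookkeeping.
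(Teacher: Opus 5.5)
Your proposal is correct and follows essentially the same route as the paper: restrict $x$ to each side, decompose each restriction, note that for $|Q|\le 3$ the multiplicities of the boundary patterns on $Q$ are determined by $k$ and the edge sums, then pair the cuts and take unions. The only differences are presentational: you derive the forced multiplicities from affine independence of the cut vectors of $K_Q$ rather than the paper's explicit formulas for $n_{ij}$ after normalizing $q_0\notin S$, and you spell out why the restrictions lie in $\Sat_k(G_i)$, which the paper leaves implicit.
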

\begin{proof}
Restrict $x\in\Sat_k(G)$ to each $G_i$ and decompose each restriction into $k$ cuts.
If $Q=\varnothing$, pair the cuts arbitrarily and take unions of their chosen sides.
Suppose $Q\ne\varnothing$ and fix $q_0\in Q$.
Complement chosen sides so that $q_0$ lies outside each one.

For $|Q|=1$ all boundary patterns agree.
For $Q=\{q_0,q_1\}$, the number of cuts with $q_1$ on the opposite side from $q_0$ is the common coordinate $x_{q_0q_1}$.
For $Q=\{q_0,q_1,q_2\}$, put $a=x_{q_0q_1}$, $b=x_{q_0q_2}$ and $c=x_{q_1q_2}$.
If $n_{ij}$ counts chosen sides $S$ with $\chi_S(q_1)=i$ and $\chi_S(q_2)=j$, then
\begin{equation}\label{eq:boundary}
 \begin{aligned}
 n_{00}&=k-\frac{a+b+c}{2},& n_{10}&=\frac{a+c-b}{2},\\
 n_{01}&=\frac{b+c-a}{2},& n_{11}&=\frac{a+b-c}{2}.
 \end{aligned}
\end{equation}
Thus the boundary-pattern multiplicities agree in both decompositions.
Reorder one list so that paired cuts agree on $Q$, and take unions of their chosen sides.
Their $k$ cut vectors sum to $x$ on every edge of $G$.
\end{proof}

\begin{lemma}[Edge deletion]\label{lem:deletion}
Let $G$ be simple and $K_5$-minor-free, and $e_0\in E(G)$.
If $G$ is saturated at height $k\geq1$, then $G\setminus e_0$ is saturated at height $k$.
\end{lemma}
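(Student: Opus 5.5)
The plan is to lift a vector from $G\setminus e_0$ to $G$ by choosing a suitable value on $e_0$, decompose the lift using saturation of $G$, and then forget the $e_0$ coordinate.

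Let $x\in\Sat_k(G\setminus e_0)$ with $e_0=uv$. It suffices to find an integer $t$ such that the extension $x'=(x,t)$ on $E(G)$ lies in $\Sat_k(G)$. Then saturation of $G$ writes $x'=\sum_{i=1}^k\chi_{\delta_G(S_i)}$. Deleting the $e_0$ coordinate gives $x=\sum_i\chi_{\delta_{G\setminus e_0}(S_i)}$, since $\delta_{G\setminus e_0}(S)=\delta_G(S)\setminus\{e_0\}$. By Lemma~\ref{lem:lattice}, $x'\in\Sat_k(G)$ amounts to two conditions. The lattice condition is that $x'(C)$ is even for every cycle $C$ of $G$. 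The cone condition is that $x'/k\in\Cut(G)$.

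\textbf{Lattice condition.} If $e_0$ is a bridge, no cycle passes through $e_0$, so the lattice condition imposes nothing on $t$. Otherwise, for any $u$--$v$ path $P$ in $G\setminus e_0$, the condition forces $t\equiv x(P)\pmod 2$. This parity does not depend on $P$, because two such paths combine into an Eulerian set of $G\setminus e_0$, on which $x$ is even. Cycles avoiding $e_0$ are already handled by the hypothesis on $x$.

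\textbf{Cone condition.} Since $\Cut(G\setminus e_0)$ is the coordinate projection of $\Cut(G)$, the set of real $t$ with $(x,t)/k\in\Cut(G)$ is a nonempty interval $[\alpha,\beta]$. To locate its endpoints I use $K_5$-minor-freeness through the Barahona--Mahjoub theorem: $\Cut(G)$ is described by $0\le y\le 1$ together with the cycle inequalities $y(F)-y(C\setminus F)\le |F|-1$, for cycles $C$ and odd $F\subseteq C$. Only inequalities involving $e_0$ constrain $t$. These come from cycles $C=P+e_0$ with $P$ a $u$--$v$ path in $G\setminus e_0$, and from the bounds $0\le t\le k$. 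Multiplying by $k$, each cycle inequality gives a bound of one of two forms.
\begin{itemize}
\item If $e_0\in F$, writing $F'=F\setminus\{e_0\}$ (an even subset of $P$), we get the upper bound $t\le k(|F|-1)-x(F')+x(P\setminus F')$.
\item If $e_0\notin F$, so $F$ is an odd subset of $P$, we get the lower bound $t\ge x(F)-x(P\setminus F)-k(|F|-1)$.
\end{itemize}
Both right-hand sides are integers congruent to $x(P)$ modulo $2$, because $|F|-1$ is even. Hence every nontrivial endpoint of $[\alpha,\beta]$ is an integer of exactly the parity required by the lattice condition.

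\textbf{Choosing $t$.} This is the step needing care, and it is a short case analysis.
\begin{itemize}
\item If either endpoint comes from a cycle inequality, take $t$ to be that endpoint.
\item Suppose instead the lower endpoint is $0$ with the wrong parity, while $\beta$ comes from a cycle inequality. Then $\beta$ has the correct parity, which differs from the parity of $0$, so $\beta\ge1$; take $t=\beta$. The symmetric case, with upper endpoint $k$ of the wrong parity, is handled the same way.
\item If $[\alpha,\beta]=[0,k]$, take $t=0$ when the required parity is even. When it is odd, take $t=1$, which is allowed since $k\ge1$.
\item If $e_0$ is a bridge, any integer in $[\alpha,\beta]$ will do.
\end{itemize}
The main obstacle is justifying the appeal to Barahona--Mahjoub, which is where the $K_5$-minor-free hypothesis enters; this is the content of Ohsugi's edge-deletion argument \cite[Theorem~2.3]{Ohsugi}. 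Once that description of $\Cut(G)$ is available, everything else is the parity bookkeeping above.
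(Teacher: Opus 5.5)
Your proof is correct and follows the paper's argument: you extend $x$ by an integer on $e_0$ lying in the interval cut out by the Barahona--Mahjoub inequalities that involve $e_0$, and you use that every cycle bound has exactly the parity the lattice condition forces. Your final case split (some endpoint is attained by a cycle bound, otherwise the interval is $[0,k]$ and you take $t\in\{0,1\}$) differs only cosmetically from the paper's split into $L<U$ and $L=U$; note that your second bullet is already covered by your first.
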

\begin{proof}
Set $G'=G\setminus e_0$ and take $x\in\Sat_k(G')$.
We seek an integer $\gamma$ extending $x$ to a vector $x'$ on $G$ with $x'_{e_0}=\gamma$ and $x'\in\Sat_k(G)$.

By the Barahona--Mahjoub description of the cut polytope \cite[Proposition~2.1 and Corollary~2.2]{Ohsugi}, $x'/k\in\Cut(G)$ precisely when $0\leq x'_e\leq k$ and
\begin{equation}\label{eq:cycle}
 x'(F)-x'(E(C)\setminus F)\leq k(|F|-1)
\end{equation}
for each induced cycle $C$ and odd subset $F\subseteq E(C)$.
Inequalities not involving $e_0$ hold because $x/k\in\Cut(G')$.
For those involving $e_0$, define
\begin{align*}
 \ell(C,F)&=x(F)-x\bigl(E(C)\setminus(F\cup\{e_0\})\bigr)-k(|F|-1)
 &&(e_0\notin F),\\
 u(C,F)&=-x(F\setminus\{e_0\})+x(E(C)\setminus F)+k(|F|-1)
 &&(e_0\in F).
\end{align*}
They impose $\gamma\geq\ell(C,F)$ and $\gamma\leq u(C,F)$, respectively.
Let $L$ be the maximum of zero and all lower bounds, and $U$ the minimum of $k$ and all upper bounds.
Every cut of $G'$ extends to $G$ using the same vertex partition.
Hence a convex representation of $x/k$ provides a real extension, proving $L\leq U$.
Both endpoints are integers.

If $e_0$ lies on no cycle, any integer in $[L,U]$ has the required lattice property.
Otherwise, for a cycle $C$ through $e_0$, put
\[
 \pi\equiv x(E(C)\setminus\{e_0\})\pmod2.
\]
This value is independent of $C$: the symmetric difference of two such cycles is an even subgraph of $G'$, decomposable into cycles, on each of which $x$ has even sum.
If $L<U$, two consecutive integers lie in the interval and one has parity $\pi$.
If $L=U$, at least one active endpoint bound must be a cycle bound, since the coordinate bounds $0$ and $k$ are distinct.
Every cycle bound has parity $\pi$, because $|F|-1$ is even and signs disappear modulo two.
The unique endpoint therefore also has the required parity.

Choose this $\gamma$.
The inequalities give $x'/k\in\Cut(G)$, and all cycle sums are even, including those through $e_0$.
Lemma~\ref{lem:lattice} gives $x'\in\Sat_k(G)$.
Decompose $x'$ into $k$ cuts of $G$ and restrict them to $G'$.
\end{proof}

\begin{proposition}\label{prop:forward}
For every $k\geq1$, $\mathsf{Sey}_k$ implies saturation at height $k$ for every simple $K_5$-minor-free graph.
\end{proposition}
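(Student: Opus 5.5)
The plan is to combine Lemma~\ref{lem:triangulation} with a structural decomposition of $K_5$-minor-free graphs, using Lemmas~\ref{lem:clique} and~\ref{lem:deletion} to transport saturation at height $k$. The relevant structure theorem is Wagner's: every edge-maximal $K_5$-minor-free graph is obtained by clique sums over cliques of size at most three from plane triangulations and copies of the Wagner graph $V_8$. Since the paper announces that it follows Ohsugi's reduction, I will phrase the argument as an induction and invoke the decomposition exactly as Ohsugi uses it. That reduction needs only the 4-connected plane triangulations, together with small base pieces, as building blocks.

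First, every simple $K_5$-minor-free graph $G$ is a spanning subgraph of an edge-maximal $K_5$-minor-free graph $G^+$ on the same vertex set; if $G$ is disconnected we may also add edges first, or treat components through clique sums with $Q=\varnothing$. By Lemma~\ref{lem:deletion}, applied one edge at a time, saturation of $G^+$ at height $k$ implies saturation of $G$. Every intermediate graph is a subgraph of $G^+$, so it is still $K_5$-minor-free, and the lemma applies at each step. It therefore suffices to treat edge-maximal graphs.

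Second, write $G^+$ as an iterated clique sum, along $K_0$, $K_1$, $K_2$ or $K_3$, of pieces, each of which is either a maximal planar graph or $V_8$. Lemma~\ref{lem:clique} then reduces the problem to the pieces themselves, by induction on the number of pieces. Each piece is an induced subgraph on its vertex set in the clique sum, and the shared clique is complete in both sides, as the lemma requires. The planar pieces on at least three vertices are simple plane triangulations, which are saturated at height $k$ by Lemma~\ref{lem:triangulation} under $\mathsf{Sey}_k$. Pieces with at most two vertices are $K_1$ or $K_2$, and I would check them directly: a forest has no cycles, so every $x$ with $0\le x\le k$ is a sum of $x_e$ copies of the cut $\{v\}$ and $k-x_e$ empty cuts.

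Third, and this is the main obstacle, is the $V_8$ piece, which is not planar, so Lemma~\ref{lem:triangulation} does not cover it. I would first try to avoid it: $V_8$ is a minor of a planar-like structure? No, it is not. Instead, I would use the known fact that the cut semigroup of $V_8$ is normal; Ohsugi handles $V_8$, and it has few vertices, so normality can be verified by computation, or one may cite it. Normality gives saturation at every height, in particular at height $k$. Alternatively, since $V_8$ is a subgraph of the graph obtained from it by adding chords, one can look for a $K_5$-minor-free supergraph built from planar triangulations, but $V_8$ is edge-maximal, so this route fails and a citation or direct check is needed. With all pieces saturated, the clique-sum induction concludes the proof.
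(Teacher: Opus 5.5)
Your proposal is correct and follows the paper's proof. Both pass to an edge-maximal $K_5$-minor-free supergraph and decompose it as a clique sum of order at most three. Triangulation pieces are handled by Lemma~\ref{lem:triangulation} and $V_8$ by citing normality, the pieces are glued with Lemma~\ref{lem:clique}, and the added edges are removed with Lemma~\ref{lem:deletion}. The only difference is cosmetic. The paper uses Ohsugi's refined decomposition into 4-connected triangulations plus $K_3$, $K_4$ and $V_8$, citing normality for the small pieces. Your Wagner decomposition into arbitrary plane triangulations works equally well, because Lemma~\ref{lem:triangulation} covers every simple plane triangulation on at least three vertices.
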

\begin{proof}
Graphs on fewer than three vertices have either no edge or one edge and satisfy the conclusion directly.
Otherwise add edges, keeping the same vertex set and excluding $K_5$ minors, until reaching an edge-maximal graph $\widehat G$.

By \cite[Proposition~4.1]{Ohsugi}, $\widehat G$ is a clique sum of copies of $K_3$, $K_4$, the Wagner graph $V_8$, and 4-connected plane triangulations, with each intersection of order at most three.
The cut semigroups of $K_3$, $K_4$ and $V_8$ are normal \cite[Section~4, Example~4.2]{Ohsugi}.

Every piece is therefore saturated at height $k$, by Lemma~\ref{lem:triangulation} in the triangulation case.
Repeated application of Lemma~\ref{lem:clique} gives saturation for $\widehat G$.
Delete the added edges one at a time and apply Lemma~\ref{lem:deletion}; every intermediate graph remains $K_5$-minor-free.
\end{proof}

\section{From cut saturation to edge-colouring}\label{sec:reverse}
Let $R$ be a nonempty connected planar $k$-graph.
Choose a plane embedding of its underlying simple graph $H$, and let $m_e\geq1$ record its edge multiplicities.
By \eqref{eq:edmonds}, $m/k\in\PM(H)$.
Choose a perfect matching $M_0$ of $H$ and define
\begin{equation}\label{eq:switch}
 p_e=\begin{cases}k-m_e,&e\in M_0,\\m_e,&e\notin M_0.\end{cases}
\end{equation}

\begin{lemma}\label{lem:switch}
The vector $p$ is integral, $p/k\in\conv\EC(H)$, and $p(\delta_H(v))$ is even for every vertex $v$.
\end{lemma}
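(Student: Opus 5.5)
The plan is to use the convex representation of $m/k$ in $\PM(H)$ given by \eqref{eq:edmonds}, and to push it through the map $P\mapsto P\symd M_0$. Integrality is immediate, since $m_e$ and $k$ are integers. Moreover $0\le m_e\le k$: the degree condition at an endpoint of $e$ gives $m_e\le k$, because $H$ is simple and loopless. Hence $0\le p_e\le k$, although this is not needed separately.

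For convexity, write $m/k=\sum_j \mu_j\chi_{P_j}$ with the $P_j$ perfect matchings of $H$, $\mu_j\ge0$ and $\sum_j\mu_j=1$. For each $j$ put $C_j=P_j\symd M_0$. Both $P_j$ and $M_0$ are perfect matchings, so every vertex meets exactly one edge of each. Its degree in $C_j$ is therefore $0$ (when the two edges coincide) or $2$, so $C_j$ is Eulerian.

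Coordinatewise, $\chi_{C_j}(e)=1-\chi_{P_j}(e)$ for $e\in M_0$ and $\chi_{C_j}(e)=\chi_{P_j}(e)$ for $e\notin M_0$. These are affine maps applied coordinatewise, and the weights $\mu_j$ sum to one. Averaging therefore gives
\[
\sum_j\mu_j\chi_{C_j}(e)=\begin{cases}1-m_e/k,&e\in M_0,\\ m_e/k,&e\notin M_0,\end{cases}
\]
which equals $p_e/k$. Thus $p/k\in\conv\EC(H)$.

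For parity, fix a vertex $v$ and let $f$ be the unique edge of $M_0$ at $v$. Then
\[
p(\delta_H(v))=(k-m_f)+\sum_{e\ni v,\,e\ne f}m_e=k-2m_f+\sum_{e\ni v}m_e=2k-2m_f,
\]
because the multiplicities at $v$ sum to the degree $k$ in $R$. This is even.

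There is no real obstacle in this argument. The only point needing care is that Edmonds's description \eqref{eq:edmonds} applies to the simple graph $H$ and gives $m/k\in\PM(H)$, which the paper has already recorded. The existence of $M_0$ follows from the same fact, since $\PM(H)$ is then nonempty.
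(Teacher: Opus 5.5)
Your proposal is correct and takes essentially the same route as the paper. You push a convex decomposition of $m/k$ into perfect matchings through $P\mapsto P\symd M_0$ to land in $\conv\EC(H)$, and you use $k$-regularity to get $p(\delta_H(v))=2(k-m_f)$.
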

\begin{proof}
Write $m/k=\sum_j\alpha_j\chi_{M_j}$ as a convex combination of perfect matchings.
By \eqref{eq:switch},
\[
 p/k=\sum_j\alpha_j\chi_{M_j\symd M_0}.
\]
Each symmetric difference is Eulerian.
If $e_0$ is the edge of $M_0$ incident with $v$, regularity gives
\[
 p(\delta_H(v))=(k-m_{e_0})+\sum_{e\in\delta_H(v)\setminus\{e_0\}}m_e
 =2(k-m_{e_0}).
\]
\end{proof}

Let $G_0=H^*$, allowing dual loops and parallel edges.
Lemmas~\ref{lem:duality} and \ref{lem:switch} show that $p/k$ is a convex combination of cuts of $G_0$, and that $p\bmod2$ itself is a cut of $G_0$.
It follows that $p$ is zero on loops and constant on parallel classes.
Delete the loops of $G_0$ and retain one edge from each parallel class, giving a simple planar graph $G$ on the same vertices and an induced vector $\bar p$.

\begin{lemma}\label{lem:simplify}
The vector $\bar p$ belongs to $\Sat_k(G)$.
Every decomposition of $\bar p$ into $k$ cuts of $G$ lifts to a decomposition of $p$ into $k$ Eulerian edge sets of $H$.
\end{lemma}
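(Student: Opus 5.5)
The plan is to pass between $G_0$ and its simplification $G$ through the natural correspondence of cuts. Since $G$ has the same vertex set as $G_0$, every $S\subseteq V$ determines a cut in both graphs. The edge $\delta_{G_0}(S)$ restricted to the retained edges is exactly $\delta_G(S)$. Conversely, $\chi_{\delta_{G_0}(S)}$ is recovered from $\chi_{\delta_G(S)}$ by assigning zero to loops and copying the value of the retained representative to every edge of its parallel class.

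Define the linear map $\iota:\RR^{E(G)}\to\RR^{E(G_0)}$ that performs exactly this extension (zero on loops, constant on parallel classes). Then $\iota(\chi_{\delta_G(S)})=\chi_{\delta_{G_0}(S)}$ for every $S$. The preceding discussion shows that $p=\iota(\bar p)$, because $p$ vanishes on loops and is constant on parallel classes.

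For membership in $\Sat_k(G)$, the first step is integrality, which is clear. Next I use Lemma~\ref{lem:switch} together with Lemma~\ref{lem:duality}. These give a convex representation $p/k=\sum_j\alpha_j\chi_{\delta_{G_0}(S_j)}$. Restricting coordinates to the retained edges yields $\bar p/k=\sum_j\alpha_j\chi_{\delta_G(S_j)}\in\Cut(G)$, so $(\bar p,k)\in\cone(A_G)$ by \eqref{eq:coneslice}. For the lattice condition, $p\bmod 2$ equals $\chi_{\delta_{G_0}(S)}\bmod 2$ for some $S$. Restricting shows that $\bar p\bmod2$ is the cut vector of $\delta_G(S)$, and Lemma~\ref{lem:lattice}(iii)$\Rightarrow$(i) gives $(\bar p,k)\in\ZZ A_G$.

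To lift a decomposition $\bar p=\sum_{i=1}^k\chi_{\delta_G(S_i)}$, I apply $\iota$. This gives $p=\sum_i\chi_{\delta_{G_0}(S_i)}$. By Lemma~\ref{lem:duality} each $\delta_{G_0}(S_i)$ corresponds under the edge bijection to an Eulerian edge set $C_i$ of $H$, so $p=\sum_i\chi_{C_i}$.

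The only points that need care are the hypotheses of Lemma~\ref{lem:duality} and the edge case where $G$ is small. Lemma~\ref{lem:duality} requires $H$ to be connected and loopless: $H$ is connected because $R$ is, and it is simple by construction. If $G_0$ has a single vertex, or $G$ has no edges, then $\Sat_k(G)$ and the decomposition statement are trivial, since the only cut is empty; I will note this separately rather than invoking Lemma~\ref{lem:lattice}. I expect no real obstacle beyond checking these details.
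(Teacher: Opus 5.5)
Your proof is correct and follows the paper's own argument. Like the paper, you restrict the convex combination and the binary cut from $G_0$ to $G$ and invoke Lemma~\ref{lem:lattice}, then lift each cut of $G$ back to $G_0$ through the same vertex partition and apply Lemma~\ref{lem:duality}. Your separate treatment of the edgeless case is harmless but not needed, since Lemma~\ref{lem:lattice} already covers that degenerate simple graph.
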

\begin{proof}
The convex combination and the binary cut both descend to $G$.
Lemma~\ref{lem:lattice} gives $\bar p\in\Sat_k(G)$.
Conversely, each cut of $G$ lifts using the same vertex partition to a cut of $G_0$.
The lifted sum agrees with $p$ on parallel classes and is zero on loops.
Apply Lemma~\ref{lem:duality}.
\end{proof}

\begin{proposition}\label{prop:reverse}
If every simple planar graph is saturated at height $k$, then every planar $k$-graph is $k$-edge-colourable.
\end{proposition}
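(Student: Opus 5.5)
We reduce to the connected case and then combine the construction of this section with the symmetric-difference switch.

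\emph{Reduction to connected graphs.} Let $R$ be a planar $k$-graph. Each component of $R$ is again a planar $k$-graph. Indeed, for an odd set $X$ inside one component, the cut $\delta_R(X)$ consists of edges of that component only. A $k$-edge-colouring of each component gives one of $R$. So we may assume $R$ is nonempty and connected, with underlying simple plane graph $H$ and multiplicity vector $m$ as above. If $H$ has a single edge, $R$ consists of $k$ parallel edges and is trivially colourable. In general the dual $H^*$ is defined once $H$ is connected and loopless, which it is.

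\emph{Constructing and decomposing $\bar p$.} By \eqref{eq:edmonds} and the remark after it, $m/k\in\PM(H)$. In particular $H$ has a perfect matching $M_0$, so the vector $p$ of \eqref{eq:switch} is defined. Lemma~\ref{lem:switch} gives that $p/k\in\conv\EC(H)$ and that $p$ has even degree sums. By the discussion preceding Lemma~\ref{lem:simplify}, we pass to the simple planar graph $G$ obtained from $G_0=H^*$ and the vector $\bar p$. Lemma~\ref{lem:simplify} gives $\bar p\in\Sat_k(G)$. The hypothesis that $G$ is saturated at height $k$ then yields $\bar p=\sum_{i=1}^k\chi_{\delta_G(S_i)}$. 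If $G$ has fewer vertices than needed for planarity arguments, this is still fine, since the hypothesis covers every simple planar graph. By Lemma~\ref{lem:simplify} this decomposition lifts to $p=\sum_{i=1}^k\chi_{C_i}$ with each $C_i$ Eulerian in $H$.

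\emph{Switching back to matchings.} Set $M_i=C_i\symd M_0$. We must check that each $M_i$ is a perfect matching of $H$. This is the main obstacle, because an arbitrary Eulerian set symmetric-differenced with $M_0$ need not be a matching. The resolution should come from the bound $p_e\le k$ together with the degree counts. At a vertex $v$, let $e_0$ be its $M_0$-edge, and set
\[
 d_i=|C_i\cap\delta_H(v)|,\qquad
 \epsilon_i=[e_0\in C_i],\qquad
 \sum_i\epsilon_i=p_{e_0}=k-m_{e_0}.
\]
Then
\[
 |M_i\cap\delta_H(v)|=d_i-2\epsilon_i+1 .
\]
For $M_i$ to be perfect we need $d_i=2\epsilon_i$. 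Since each $d_i$ is even, $d_i\ge 2\epsilon_i$ holds automatically. Summing over $i$,
\[
 \sum_i d_i=p(\delta_H(v))=2(k-m_{e_0})=2\sum_i\epsilon_i
\]
by the computation in Lemma~\ref{lem:switch}. Hence equality holds termwise, so $d_i=2\epsilon_i$ for every $i$. Each $M_i$ therefore meets every vertex exactly once and is a perfect matching.

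\emph{Recovering the colouring.} We count how often each edge $e$ is used. If $e\notin M_0$, it lies in exactly $p_e=m_e$ of the $M_i$. If $e\in M_0$, it lies in exactly $k-p_e=m_e$ of them. Thus the multiset $\{M_1,\dots,M_k\}$ uses every edge of $H$ exactly $m_e$ times. Assigning distinct parallel copies of $e$ to these occurrences gives a partition of $E(R)$ into $k$ perfect matchings, which is a $k$-edge-colouring of $R$.
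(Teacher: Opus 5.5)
Your proof is correct and follows the paper's argument: build $p$ by the switch \eqref{eq:switch}, decompose it through Lemma~\ref{lem:simplify}, and set $M_i=C_i\symd M_0$. Your termwise bound $d_i\ge 2\epsilon_i$ with equal sums is the paper's ``$k$ positive odd integers summing to $k$'' count rewritten via $d_{Q_i}(v)=d_i-2\epsilon_i+1$; the only other difference is that you handle disconnected $R$ at the start rather than at the end.
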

\begin{proof}
First consider connected nonempty $R$.
By the assumption and Lemma~\ref{lem:simplify}, write $p=\sum_{i=1}^k\chi_{C_i}$ with each $C_i$ Eulerian in $H$.
Define $Q_i=M_0\symd C_i$.
Each $Q_i$ has odd degree at every vertex.
For every edge, \eqref{eq:switch} yields
\begin{equation}\label{eq:aggregate}
 \sum_{i=1}^k\chi_{Q_i}(e)=m_e.
\end{equation}
For $e\in M_0$, the left side is $k-p_e$; otherwise it is $p_e$.
Consequently,
\begin{equation}\label{eq:budget}
 \sum_{i=1}^k d_{Q_i}(v)=m(\delta_H(v))=k\quad(v\in V(H)).
\end{equation}
The left side consists of $k$ positive odd integers.
All are therefore one, so every $Q_i$ is a perfect matching.
For each support edge, assign its parallel copies bijectively to the indices where it occurs in \eqref{eq:aggregate}.
The resulting $k$ perfect matchings partition $E(R)$.

Every component of a disconnected $k$-graph has even order: an odd component would violate \eqref{eq:oddcut}.
Each component inherits the same condition, so the connected construction applies componentwise with the same colour set.
The empty graph is immediate.
\end{proof}

\begin{proof}[Proof of Theorem~\ref{thm:main}]
Proposition~\ref{prop:forward} proves (iii)$\Rightarrow$(i), and Proposition~\ref{prop:reverse} proves (ii)$\Rightarrow$(iii).
Planarity gives (i)$\Rightarrow$(ii).
\end{proof}

\section{Consequences}\label{sec:consequences}
\begin{corollary}\label{cor:conjectures}
The assertion that every $K_5$-minor-free graph has a normal homogeneous cut semigroup is equivalent to Seymour's assertion that every planar $k$-graph is $k$-edge-colourable for every positive integer $k$.
\end{corollary}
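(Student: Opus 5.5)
The plan is to derive Corollary~\ref{cor:conjectures} directly from Theorem~\ref{thm:main}, using the fact recorded in the introduction that normality of $\mathsf S_G$ means saturation at every positive height. The height-zero slice of $\ZZ A_G\cap\cone(A_G)$ is only the origin, since every generator has last coordinate one. First I will make this precise as the following statement: $\mathsf S_G$ is normal, meaning $\mathsf S_G=\ZZ A_G\cap\cone(A_G)$, if and only if $\Sat_k(G)=\Dec_k(G)$ for every $k\geq1$.

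To check this, grade both sides by the last coordinate. The height-$k$ part of $\mathsf S_G$ consists of the vectors $(x,k)$ with $x\in\Dec_k(G)$: a nonnegative integer combination of lifted cuts of height $k$ uses exactly $k$ generators counted with repetition, and the empty cut is allowed. The height-$k$ part of $\ZZ A_G\cap\cone(A_G)$ is $\Sat_k(G)\times\{k\}$ by definition. At height zero, a conic combination with total coefficient zero is zero, so both sides equal $\{0\}$. Negative heights do not occur in the cone. Hence equality of the two semigroups is equivalent to equality at every positive height.

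Next I apply Theorem~\ref{thm:main} for each $k$ separately. Suppose every $K_5$-minor-free graph is normal. Then statement (i) holds for every $k$, so (iii) holds for every $k$, which is Seymour's assertion $\mathsf{Sey}_k$ for all $k$. Conversely, suppose $\mathsf{Sey}_k$ holds for all $k$. Then (i) holds for every $k$, so every simple $K_5$-minor-free graph is saturated at every positive height, and is therefore normal by the first step. Graphs here are simple by the paper's standing convention, which matches statement (i).

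There is essentially no obstacle. The only point needing care is the identification of the graded pieces of $\mathsf S_G$ with $\Dec_k(G)$, namely that each generator has height exactly one. This is immediate from the definition of $A_G$.
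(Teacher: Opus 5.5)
Your proposal is correct and takes the same route as the paper: it applies Theorem~\ref{thm:main} at every height $k\geq1$, using the introduction's remark that $\mathsf S_G$ is normal exactly when $\Sat_k(G)=\Dec_k(G)$ for all positive $k$. Your grading of $\mathsf S_G$ and $\ZZ A_G\cap\cone(A_G)$ by the last coordinate simply makes that remark explicit.
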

\begin{proof}
Apply Theorem~\ref{thm:main} for all $k\geq1$.
\end{proof}

The converse implication in the cut-normality characterization is already known: a graph with a $K_5$ minor has a nonnormal cut semigroup \cite[discussion after Conjecture~3.7]{SS}\cite[Section~2]{Ohsugi}.

\begin{corollary}\label{cor:holes}
For a simple plane triangulation $G$, every point of $\Sat_k(G)\setminus\Dec_k(G)$ gives a planar $k$-graph that is not $k$-edge-colourable.
Conversely, every planar $k$-graph that is not $k$-edge-colourable gives such a point for a simple planar graph.
\end{corollary}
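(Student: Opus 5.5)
The plan is to read both directions directly off the constructions already used for Theorem~\ref{thm:main}, keeping track of explicit points rather than universal statements.

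\emph{First assertion.} Let $G$ be a simple plane triangulation and $x\in\Sat_k(G)\setminus\Dec_k(G)$. If $G$ has at least three vertices, its dual $H=G^*$ is a connected, loopless, cubic plane multigraph. Transfer $x$ to a vector $p$ on $E(H)$ along the edge correspondence. As in the proof of Lemma~\ref{lem:triangulation}, Lemmas~\ref{lem:lattice} and~\ref{lem:duality} show that $p$ satisfies \eqref{eq:admissible}. The key point is that $x/k\in\Cut(G)$ becomes $p/k\in\conv\EC(H)$, and $x\bmod2$ being a cut becomes even degree sums at each vertex of $H$.

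Proposition~\ref{prop:gadget} then says that $R(H,p,k)$ is a planar $k$-graph. By \eqref{eq:local-equivalence}, $R(H,p,k)$ is $k$-edge-colourable exactly when $p$ is a sum of $k$ Eulerian sets. By duality, that happens exactly when $x$ is a sum of $k$ cuts of $G$, that is, when $x\in\Dec_k(G)$. Since $x\notin\Dec_k(G)$, the graph $R(H,p,k)$ is not $k$-edge-colourable. Degenerate triangulations with fewer than three vertices have at most one edge. For them $\Sat_k=\Dec_k$ directly, so there is nothing to prove.

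\emph{Converse.} Let $R$ be a planar $k$-graph that is not $k$-edge-colourable. First reduce to the connected case. Every component has even order and is itself a $k$-graph, as noted in Proposition~\ref{prop:reverse}, and a colouring of each component with the same $k$ colours would colour $R$. So some component is not $k$-edge-colourable, and we replace $R$ by it.

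Now run the construction of Section~\ref{sec:reverse}. Embed the underlying simple graph $H$, pick a perfect matching $M_0$ (one exists because $m/k\in\PM(H)$), and define $p$ by \eqref{eq:switch}. Pass to $G_0=H^*$ and simplify to a simple planar graph $G$ with induced vector $\bar p$. Lemma~\ref{lem:simplify} gives $\bar p\in\Sat_k(G)$.

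Suppose, for contradiction, that $\bar p\in\Dec_k(G)$. Lemma~\ref{lem:simplify} lifts the decomposition to $p=\sum_i\chi_{C_i}$ with each $C_i$ Eulerian in $H$. The argument of Proposition~\ref{prop:reverse} then makes the sets $M_0\symd C_i$ perfect matchings that partition $E(R)$. That is a $k$-edge-colouring, a contradiction. Hence $\bar p\in\Sat_k(G)\setminus\Dec_k(G)$.

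The only real obstacle is bookkeeping. The forward direction must produce a triangulation with a connected loopless cubic dual, which is guaranteed by simplicity and at least three vertices. The converse only promises a simple planar graph, not a triangulation, and this matches the statement.
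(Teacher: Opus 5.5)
Your proposal is correct and follows the paper's proof. The first assertion is Proposition~\ref{prop:gadget} applied to $G^*$ together with Lemma~\ref{lem:duality}, and the converse is the switch \eqref{eq:switch} plus Lemma~\ref{lem:simplify}, where a cut decomposition would yield the colouring of Proposition~\ref{prop:reverse}; your extra bookkeeping (triangulations on fewer than three vertices, and passing to a non-colourable connected component so that the connected construction of Section~\ref{sec:reverse} applies) only makes explicit what the paper's two-line proof leaves implicit.
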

\begin{proof}
The first statement follows from Proposition~\ref{prop:gadget} on $G^*$.
For the second, use \eqref{eq:switch} and Lemma~\ref{lem:simplify}; a cut decomposition would give the colouring in Proposition~\ref{prop:reverse}.
\end{proof}

\begin{corollary}\label{cor:eight}
Every simple $K_5$-minor-free graph is saturated at heights $1,\ldots,8$.
In particular, any hole in its homogeneous cut semigroup has height at least nine.
\end{corollary}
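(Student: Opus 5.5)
The plan is to apply Theorem~\ref{thm:main} at each height $k\in\{1,\ldots,8\}$ separately, using the implication (iii)$\Rightarrow$(i), that is, Proposition~\ref{prop:forward}. So the whole task reduces to checking $\mathsf{Sey}_k$ for $1\leq k\leq 8$: every planar $k$-graph is $k$-edge-colourable. These cases are known in the literature, and I would cite them rather than reprove them. The case $k\leq2$ is elementary. For $k=1$ a $1$-graph is a perfect matching. For $k=2$ each component is an even cycle; odd cycles are excluded by \eqref{eq:oddcut} applied to the vertex set of the component. The case $k=3$ is equivalent to the four-colour theorem, via Tait's argument and the fact that planar $3$-graphs are exactly the bridgeless planar cubic multigraphs. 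The cases $k=4,5$ are due to Guenin, $k=6$ to Dvo\v{r}\'ak, Kawarabayashi and Kr\'al', $k=7$ to Edwards and Kawarabayashi, and $k=8$ to Chudnovsky, Edwards and Seymour \cite{CES}. I would take these statements in the form of \cite[Conjecture~1.1]{CES}, which the paper identifies with $\mathsf{Sey}_k$.

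One check is needed: the cited results must be stated for the same class of graphs. They concern planar $k$-regular multigraphs satisfying the odd-cut condition \eqref{eq:oddcut}, possibly with the extra hypothesis of connectivity. If a source assumes connectivity, I would reduce to components as at the end of the proof of Proposition~\ref{prop:reverse}. Each component is again a $k$-graph, and its colourings can be combined using a common colour set. Alternatively, for $k\leq 2$ one could skip Seymour's conjecture entirely and invoke \cite[Corollary~5]{LM} together with Ohsugi's reduction. However, routing every case through Proposition~\ref{prop:forward} is uniform, so that is the route I would use.

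The second sentence of the corollary is a reformulation. A hole is a lattice point of $\ZZ A_G\cap\cone(A_G)$ that is not in $\mathsf S_G$. By definition such a point has the form $(x,k)$ with $x\in\Sat_k(G)\setminus\Dec_k(G)$. Height zero contributes only the origin, as noted in the introduction, so there are no holes at height zero. Saturation at heights $1,\dots,8$ then forces any hole to have height $k\geq 9$.

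The only real obstacle is bibliographic: matching each cited theorem exactly to $\mathsf{Sey}_k$, including hypotheses such as connectivity or simplicity. I expect this to be resolved by the componentwise remark above.
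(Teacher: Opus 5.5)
Your proposal is the paper's own proof: verify $\mathsf{Sey}_k$ for $k\leq 8$ (elementary for $k\leq 2$, cited results for $3\leq k\leq 8$) and apply the implication (iii)$\Rightarrow$(i) of Theorem~\ref{thm:main}. Your height-zero remark correctly supplies the ``in particular'' clause, which the paper leaves implicit; the only slip is bibliographic, since the paper credits $k=7$ to Chudnovsky, Edwards, Kawarabayashi and Seymour \cite{CEKS} rather than to Edwards and Kawarabayashi alone.
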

\begin{proof}
The cases $k=1,2$ of $\mathsf{Sey}_k$ are elementary: a 1-regular graph is a matching, and the odd-cut condition excludes odd cycle components of a 2-regular graph.
For $3\leq k\leq8$, use the established edge-colouring results recorded in Table~\ref{tab:cases}, and then apply Theorem~\ref{thm:main}.
\end{proof}

\begin{table}[ht]
\centering
\caption{Known cases of $\mathsf{Sey}_k$.}\label{tab:cases}
\begin{tabular}{@{}cl@{}}
\toprule
$k$ & Reference\\
\midrule
1, 2 & Elementary\\
3 & Four-colour theorem; \cite[Section~1]{CES}\\
4, 5 & Guenin's results, as recorded in \cite[Section~1]{CES}\\
6 & Dvo\v r\'ak--Kawarabayashi--Kr\'al', \cite[Theorem~2]{DKK}\\
7 & Chudnovsky--Edwards--Kawarabayashi--Seymour, \cite[Theorem~1.2]{CEKS}\\
8 & Chudnovsky--Edwards--Seymour, \cite[Theorem~1.2]{CES}\\
\bottomrule
\end{tabular}
\end{table}

\begin{remark}
For a fixed $K_5$-minor-free graph, the cut polytope is very ample, so its homogeneous semigroup has only finitely many holes \cite[Theorem~2]{LM}.
This gives saturation at all sufficiently large heights, with a bound depending on the graph.
Laso\'n and Micha\l{}ek also proved that a cut semigroup is seminormal if and only if it is normal \cite[Theorem~3]{LM}.
\end{remark}

\begin{remark}
The argument at \eqref{eq:budget} only requires $\sum_i d_{Q_i}(v)<k+2$: a sum of $k$ positive odd integers below $k+2$ must equal $k$.
Equality at $k+2$ allows the degrees $3,1,\ldots,1$.
\end{remark}

\section{Restrictions to cliques}\label{sec:boundary}
In Lemma~\ref{lem:clique}, the common edge coordinates determine the multiplicities of the restricted cuts.
To describe when this holds, let $B_q$ be the matrix whose columns are $(1,\chi_{\delta_{K_q}(S)})$ with one vertex fixed outside $S$.

\begin{proposition}\label{prop:rank}
For $q\geq1$, $\operatorname{rank}_{\mathbb Q}B_q=1+\binom q2$.
In particular, $B_q$ has full column rank if and only if $q\leq3$.
\end{proposition}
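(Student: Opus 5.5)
The matrix $B_q$ has $1+\binom q2$ rows: a top row of ones, and one row for each edge of $K_q$. It has $2^{q-1}$ columns, one for each subset $S\subseteq V(K_q)\setminus\{q_0\}$, where $q_0$ is the fixed vertex. So the rank is at most $1+\binom q2$, and the plan is to show that this bound is attained by exhibiting that the rows are linearly independent.

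To see independence, change coordinates on the columns. Identify $S$ with $s\in\{0,1\}^{q-1}$, indexed by the vertices $q_1,\ldots,q_{q-1}$, and set $s_0=0$ for $q_0$. Over $\mathbb Q$ the cut entry for an edge $q_iq_j$ is
\[
\chi_{\delta(S)}(q_iq_j)=s_i+s_j-2s_is_j .
\]
For $i=0$ this is just $s_j$. Each row of $B_q$ is therefore a multilinear polynomial function on $\{0,1\}^{q-1}$ of degree at most two. Multilinear monomials are linearly independent as functions on the cube. The row space is thus isomorphic to the span of these polynomials inside the space with basis $1$, $s_j$ ($1\le j\le q-1$) and $s_is_j$ ($1\le i<j\le q-1$). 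That space has dimension $1+(q-1)+\binom{q-1}2=1+\binom q2$.

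It remains to check that the $1+\binom q2$ row polynomials form a basis, which is triangular. The constant row gives $1$, the rows $q_0q_j$ give $s_j$, and then each row $q_iq_j$ with $i,j\ge1$ gives $-2s_is_j$ modulo the earlier ones. Hence the rank equals $1+\binom q2$. The $q=1$ case, a single row of ones and a single column, is consistent with this.

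For the full-column-rank statement, compare $1+\binom q2$ with the number of columns $2^{q-1}$. For $q=1,2,3$ these are $1=1$, $2=2$ and $4=4$, so $B_q$ is square of full rank. For $q\ge4$ we have $2^{q-1}>1+\binom q2$; for instance $8>7$ when $q=4$, and for larger $q$ this follows by induction, since $2^{q-1}$ doubles while $\binom q2$ grows by $q$. So there are more columns than the rank and the columns are dependent. The only step needing care is the claim that multilinear monomials are independent on the cube. This holds by Möbius inversion: evaluate a vanishing combination at indicator vectors in increasing order of support.
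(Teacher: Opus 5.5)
Your proof is correct and follows essentially the paper's route. Both arguments read the rows of $B_q$ as functions of degree at most two on the Boolean cube of dimension $q-1$, show that these span a space of dimension $1+\binom q2$, and compare this with the $2^{q-1}$ columns. The only difference is the choice of basis. The paper uses $\pm1$ coordinates, where the rows become $(1-z_i)/2$ and $(1-z_iz_j)/2$ and independence follows from orthogonality of the characters $1,z_i,z_iz_j$. You use $0/1$ monomials, a triangular change of basis and M\"obius inversion; you also justify the strict inequality $2^{q-1}>1+\binom q2$ for $q\geq4$, which the paper only asserts.
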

\begin{proof}
Represent the remaining vertex bits by signs $z_1,\ldots,z_{q-1}\in\{\pm1\}$, with root sign one.
Edge coordinates are $(1-z_i)/2$ at the root and $(1-z_i z_j)/2$ otherwise.
Thus the row span is generated by the distinct Boolean characters
\[
 1,\quad z_i\ (1\leq i\leq q-1),\quad z_i z_j\ (1\leq i<j\leq q-1).
\]
Their pairwise products have zero sum over the sign cube unless the characters agree, so they are linearly independent.
There are $1+\binom q2$ such characters.
This equals $2^{q-1}$ for $q\leq3$ and is strictly smaller for $q\geq4$.
At $q=4$, the two multisets of normalized bit patterns
\[
 \{0000,0011,0101,0110\},\qquad
 \{0001,0010,0100,0111\}
\]
are distinct, but both have height four and total coordinate two on every edge.
Thus the restricted cut multiplicities need not be determined by their edge sums and total number.
\end{proof}

For larger cliques, the pairing argument of Lemma~\ref{lem:clique} still applies whenever the two decompositions have the same multiplicities for every restricted cut.

\section*{Declaration}
GPT was used in generating proofs.

\medskip
\noindent Numbered citations refer to the preprint versions listed below, except for Edmonds's original article.

{\small
\bibliographystyle{plain}
\bibliography{dcsb}
}
\end{document}